\documentclass{amsart}
\usepackage[mathscr]{eucal}
\usepackage{graphicx}
\usepackage{amscd}
\usepackage{amsmath}
\usepackage{amsthm}
\usepackage{amsxtra}
\usepackage{calc} 
\usepackage{amsfonts}
\usepackage{amssymb}
\usepackage{pdfsync}
\usepackage[all]{xy}
\usepackage[colorlinks, bookmarks=true]{hyperref}

\newtheorem{theorem}{Theorem}
\theoremstyle{plain}

\newtheorem{corollary}[theorem]{Corollary}
\newtheorem{corollary-definition}[theorem]{Corollary-Definition}

\newtheorem{lemma}[theorem]{Lemma}

\newtheorem{proposition}[theorem]{Proposition}

\numberwithin{equation}{section}

\theoremstyle{definition}

\newtheorem{remark}[theorem]{Remark}

\newcommand{\Ker}{\mathrm{Ker}\,}
\newcommand{\Cok}{\mathrm{Coker}\,}

\newcommand{\Hom}{\mathrm{Hom}}
\newcommand{\Ext}{\mathrm{Ext}}
\newcommand{\Tor}{\mathrm{Tor}}
\newcommand{\Imr}{\mathrm{Im}\,}

\newcommand{\Tr}{\mathrm{Tr}\,}


\newcounter{hours}
\newcounter{minutes}

\begin{document}
\title{On direct summands of homological functors on length categories}

\author{Alex Martsinkovsky}
\address{Mathematics Department\\
Northeastern University\\
Boston, MA 02115, USA}
\email{alexmart@neu.edu}
\date{\today, \setcounter{hours}{\time/60} \setcounter{minutes}{\time-\value
{hours}*60} \thehours\,h\ \theminutes\,min}
\subjclass[2010]{Primary: 18A25; Secondary: 18G15 }
\keywords{finitely presented functor, length category, injective object, homological functor}

\begin{abstract}
We show that direct summands of certain additive functors arising as bifunctors with a fixed argument in an abelian category are again of that form whenever the fixed argument has finite length or, more generally, satisfies the descending chain condition on images of nested endomorphisms. In particular,
this provides a positive answer to a conjecture of M. Auslander in the case of categories of finite modules over artin algebras. This implies that the covariant Ext functors are the only injectives in the category of defect-zero finitely presented functors on such categories.
\end{abstract}
\maketitle

\section{Introduction}
This note concerns the conjecture of M.~Auslander that a direct summand of a covariant $\Ext$-functor is again of that form. More precisely, suppose $\mathscr{A}$ is an abelian category with enough projectives, $A$ is an object of $\mathscr{A}$, and $F$ is a direct summand of the functor $\Ext^{1}(A,-)$. Then the question is whether $F$ is of the form $\Ext^{1}(B,-)$ for some object $B$ of $\mathscr{A}$. The motivation for this problem comes from Auslander's foundational work~\cite{Aus66} on coherent functors from abelian categories to the category of abelian groups. It is an immediate consequence of 
Yoneda's lemma and the left-exactness of the $\Hom$ functor that each coherent functor gives rise (non-uniquely) to an exact sequence $X \to Y \to Z \to 0$ in the original abelian category. Specializing to such exact sequences which are short exact, one is naturally led to consider the corresponding subcategory $\mathscr{A}_{0}$ of coherent functors. Thus the objects of this subcategory are coherent functors whose projective resolutions are images of short exact sequences in $\mathscr{A}$ under the Yoneda embedding. An immediate example of such a functor is $\Ext^{1}(A,-)$.

As Auslander showed~\cite[Prop. 4.3]{Aus66}, if the answer to the above question is in the affirmative for any $A \in \mathscr{A}$ and any $F$, then the functors of the form 
$\Ext^{1}(B,-)$ are the only injectives in $\mathscr{A}_{0}$. He also 
showed~\cite[Prop.~4.7]{Aus66} that the answer is positive assuming that $A$ is of finite projective dimension. P.~Freyd \cite{Fr66} showed the same in the case $\mathscr{A}$ has countable sums. In \cite{Aus69} Auslander undertook a more systematic study of this problem and gave a unifying proof of the above two results. In addition, he provided a detailed analysis in the case when $\mathscr{A}$ was a category of modules over a ring. Among other things, he showed that, in general, a direct summand of the functor 
$\Ext^{1}(A,-)$, where $A$ is a finitely generated module over a ring $R$, need not be of the form $\Ext^{1}(B,-)$ for some finitely generated $R$-module $B$, even when $R$ is noetherian.

In this note, we return to general abelian categories and show that the answer to the above question is still positive if $A$ is noetherian and satisfies the descending chain condition on images of nested endomorphisms. 

In fact, this result  is proved in greater generality -- it holds for any additive functor of the form $G(A,-)$ (resp., $G(-,A)$), provided natural endo-transformations of such a functor are induced from endomorphisms of its fixed argument, see Theorem~\ref{main} for details. 

A preliminary result, establishing the Fitting lemma for objects of finite length in an abelian category, is recalled in the next section. In the section following the proof of Theorem~\ref{main}, we apply this theorem to the functors covariant 
$\mathrm{Ext}$, covariant 
$\Hom$ modulo projectives, and $\mathrm{Tor}$.

\section{Preliminaries}

This section contains preliminary results, all of which are well-known. For the convenience of the reader, their proofs are included in the case of a general abelian category, since the author could not find them in the literature.

Let $\mathscr{A}$ be an abelian category and $A$ an object of $\mathscr{A}$. Recall that the intersection of two subobjects of $A$ always exists and can be defined as the pullback of the corresponding  monomorphisms \cite[Prop. 4.2.3]{B1}. Furthermore, the union of two subobjects of $A$ always exists and can be defined as the pushout over their intersection \cite[Prop. 1.7.4]{B2}. In particular, for any endomorphism
$f : A \to A$, the intersection $\Ker f^{n} \cap \Imr f^{n}$ and the union 
$\Ker f^{n} + \Imr f^{n}$ are defined as subobjects of $A$ for any natural~$n$. 

The commutative diagram
\[
\xymatrix
	{
	A \ar@{=}[d] \ar[r]^{f} & A \ar[d]^{f}\\
	A \ar[r]^{f^{2}} & A
	}
\] 
shows that $\Ker f$ is a subobject of $\Ker f^{2}$ and, likewise, $\Ker f^{n}$ is a subobject of $\Ker f^{n+1}$. It also shows that $f$ induces an epimorphism 
$\Imr f \to \Imr f^{2}$ and, likewise, an epimorphism $\Imr f^{n} \to \Imr f^{n+1}$.

The commutative diagram
\[
\xymatrix
	{
	A \ar[d]^{f} \ar[r]^{f^{2}} & A \ar@{=}[d]\\
	A \ar[r]^{f} & A
	}
\] 
shows that $\Imr f^{2}$ is a subobject of $\Imr f$ and, likewise, $\Imr f^{n+1}$ is a subobject of $\Imr f^{n}$.

The object $A$ is said to be of finite length if it has both the descending chain condition (DCC) and the ascending chain condition (ACC). This is equivalent to saying that there exists a finite chain of subobjects 
\[
0 =  A_{0} \subset A_{1} \subset \ldots \subset A_{s} = A
\]
whose successive quotients are simple objects. The next lemma is elementary and well-known, at least for modules. For completeness, we give a categorical proof.

\begin{lemma}\label{pushout}
 Let
\[
\xymatrix
	{
0  \ar[r] & A \ar[r]^{a} \ar[d]^{b} &  B  \ar[r]^{r} \ar[d]^{d}  & C  \ar[r]  \ar@{=}[d] & 0 \\
0  \ar[r] & D  \ar[r]^{c} &  E  \ar[r]^{t} & C  \ar[r] & 0 	
	}
\] 
be a commutative diagram in $\mathscr{A}$ with exact rows. Then $(c,d)$ is a pushout of~$(a,b)$. 
\end{lemma}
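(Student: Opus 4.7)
The plan is to manufacture an abstract pushout of the left-hand pair $(a, b)$ and then identify it with $(c, d)$. Concretely, I would let $(c' \colon D \to P,\, d' \colon B \to P)$ be any pushout of $(a, b)$ in $\mathscr{A}$, so that $d' a = c' b$. The given commutativity $d a = c b$ then invokes the universal property of $P$ to produce a unique morphism $e \colon P \to E$ satisfying $e c' = c$ and $e d' = d$. Everything reduces to showing that $e$ is an isomorphism.

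For this I would place $P$ inside its own short exact sequence mirroring the lower row. Feeding the pair $(0 \colon D \to C,\, r \colon B \to C)$ into the universal property of $P$, which is legitimate because $r a = 0 = 0 \cdot b$, produces a morphism $r' \colon P \to C$ satisfying $r' c' = 0$ and $r' d' = r$. Uniqueness in that universal property then forces $t e = r'$, because both of these morphisms satisfy the same two equations against $c'$ and $d'$. Thus $e$ assembles into a candidate morphism of short exact sequences
\[
\xymatrix
	{
0  \ar[r] & D \ar[r]^{c'} \ar@{=}[d] &  P  \ar[r]^{r'} \ar[d]^{e}  & C  \ar[r]  \ar@{=}[d] & 0 \\
0  \ar[r] & D  \ar[r]^{c} &  E  \ar[r]^{t} & C  \ar[r] & 0
	}
\]
and, once the top row is verified to be exact, the short five lemma will finish the argument.

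The exactness of the top row is the main obstacle and is the only non-formal input. I would argue it from the standard presentation of the pushout as $P = \Cok\bigl((a, -b) \colon A \to B \oplus D\bigr)$: since $a$ is monic, so is $(a, -b)$ (test against the first projection), which gives a short exact sequence $0 \to A \to B \oplus D \to P \to 0$; a direct identification then shows that $c'$ is monic and that $r'$ is its cokernel. Invoking the short five lemma on the diagram above now yields that $e$ is an isomorphism, which is precisely the claim that $(c, d)$ is a pushout of $(a, b)$.
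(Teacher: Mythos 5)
Your proposal is correct and follows essentially the same route as the paper: both compare the given lower row with an abstractly constructed pushout of $(a,b)$ via its universal property, identify the induced map to $C$ by uniqueness, and conclude that the comparison morphism is an isomorphism from the short exactness of the pushout row together with the short five lemma. If anything, your justification of that short exactness via the presentation $P = \Cok\bigl((a,-b)\colon A \to B\oplus D\bigr)$ is more explicit than the paper's, which simply asserts it as part of the diagram.
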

\begin{proof}
 Let $(x,y)$ be a pushout of $(a,b)$. We then have a commutative diagram
 \[
\xymatrix
	{
	A \ar[dd]^{b} \ar@{>->}[drr]_{a} & & & & \\
	& & B \ar[ld]_{y} \ar@{->>}[drr]^{r} \ar [dd]_<<<<<<{d} & & \\
	D \ar@{>->}[r]^{x} \ar@{=}[dd] &  Z \ar@{..>}[dr]_{u} \ar@{->>} '[r] [rrr]^{s} & & &C \\
	& & E \ar@{->>}[urr]^{t} & & \\
	D \ar@{>->}[urr]^{c} & & & &
	}
\] 
of solid arrows, where the horizontal and the two slanted sequences are short exact. Since $da = cb$, the universal property of pushouts yields a map $u : Z \to E$ with $uy = d$ and $ux = c$. We claim that $s = tu$. Since $ra = 0 = tcb$, there is a unique morphism $q : Z \to C$ such that $qy = r$ and $qx = tc$. But both~$s$ and~$tu$ have this property, and therefore $s = tu$. Clearly, $u$ is an isomorphism. It now follows that $(c,d)$ is also a  pushout.
\end{proof}

Now we recall the Fitting lemma in an abelian category. 

\begin{lemma}\label{Fitting}
In the above notation, let $f : A \to A$ be an endomorphism. 
\begin{enumerate}

\item If $A$ has the ACC, then $0 \simeq \Ker f^{n} \cap \Imr f^{n}$ for all $n$ large enough. If $f$ is an epimorphism, then it is an isomorphism.
\smallskip

\item If $A$ has the DCC, then the inclusion of $\Ker f^{n} + \Imr f^{n}$ in $A$ is an isomorphism for all $n$ large enough. If $f$ is a monomorphism, then it is an isomorphism.
\smallskip
\item (The Fitting Lemma) If $A$ is of finite length, then $A \simeq \Ker f^{n} \oplus \Imr f^{n}$ for all $n$ large enough. The morphism $f$ is nilpotent on  $\Ker f^{n}$ for all $n$ and is an isomorphism on $\Imr f^{n}$ for all $n$ large enough. Also, $f$ is an isomorphism whenever it is a monomorphism or an epimorphism.

\end{enumerate}

\end{lemma}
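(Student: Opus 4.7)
The plan is to establish (1) and (2) in parallel---(1) via the ACC-stabilization of the ascending chain $\Ker f \subseteq \Ker f^{2} \subseteq \cdots$, and (2) via the DCC-stabilization of the descending chain $\Imr f \supseteq \Imr f^{2} \supseteq \cdots$ (most cleanly obtained by passing to $\mathscr{A}^{\mathrm{op}}$)---and then to deduce (3) by combining them. The substance of (1) is the observation that, once the kernel chain stabilizes, the canonical epimorphisms $A \twoheadrightarrow \Imr f^{n}$ and $A \twoheadrightarrow \Imr f^{2n}$ share a kernel, and hence represent the same quotient of $A$.

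For (1), I would fix $n$ large enough that $\Ker f^{n} = \Ker f^{2n}$ and consider the epi-mono factorizations $f^{n} = m_{n} p_{n}$ and $f^{2n} = m_{2n} p_{2n}$, with $p_{n}\colon A \twoheadrightarrow \Imr f^{n}$ and $p_{2n}\colon A \twoheadrightarrow \Imr f^{2n}$. Since $\Ker p_{n} = \Ker p_{2n}$, the universal property of the quotient produces a unique isomorphism $\alpha\colon \Imr f^{n} \to \Imr f^{2n}$ with $\alpha p_{n} = p_{2n}$. Next I would form the composite $\phi = f^{n} m_{n}\colon \Imr f^{n} \to A$: its kernel is precisely $\Imr f^{n} \cap \Ker f^{n}$, and its image is $\Imr f^{2n}$ (since $\phi p_{n} = f^{2n}$ and $p_{n}$ is epi). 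Corestricting $\phi$ to its image yields an epi $\bar{\phi}\colon \Imr f^{n} \to \Imr f^{2n}$ with $\bar{\phi} p_{n} = p_{2n}$, whence $\bar{\phi} = \alpha$ by uniqueness; hence $\bar{\phi}$ is an isomorphism and $\Imr f^{n} \cap \Ker f^{n} = 0$. If in addition $f$ is epi, then $\Imr f^{n} = A$, the intersection reduces to $\Ker f^{n}$, which is therefore zero, and $f$ is mono---hence iso.

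For (2), I would pass to $\mathscr{A}^{\mathrm{op}}$: DCC in $\mathscr{A}$ is ACC there, kernels and cokernels swap roles, and the intersection (pullback) in $\mathscr{A}^{\mathrm{op}}$ of the subobjects corresponding to the $\mathscr{A}$-quotients $A/\Ker f^{n}$ and $A/\Imr f^{n}$ is, by pullback-pushout duality, the $\mathscr{A}$-quotient $A/(\Ker f^{n} + \Imr f^{n})$. Applying part (1) in $\mathscr{A}^{\mathrm{op}}$ forces this quotient to vanish, giving $\Ker f^{n} + \Imr f^{n} = A$; the mono-to-iso statement is handled analogously. For (3), combining (1) and (2) gives subobjects $K = \Ker f^{n}$ and $I = \Imr f^{n}$ with $K \cap I = 0$ and $K + I = A$, so the canonical sum map $K \oplus I \to A$ is mono (kernel $= K \cap I$) and epi (image $= K + I$), hence iso. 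Nilpotency of $f$ on $K$ follows from $f^{n}|_{K} = 0$; for $n$ large, $\Imr f^{2n} = \Imr f^{n}$ by DCC, so $\bar{\phi}$ from step (1) is now an endomorphism of $I$ and identifies $f^{n}|_{I}$ as an automorphism, which in any abelian category forces $f|_{I}$ itself to be an automorphism. The final clause of (3) restates the epi/mono implications of (1) and (2).

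The main obstacle is the element-free translation of the classical argument ``$y \in \Ker f^{n} \cap \Imr f^{n}$ implies $y = f^{n}(x)$ with $f^{2n}(x) = 0$, so $x \in \Ker f^{2n} = \Ker f^{n}$ and $y = 0$.'' The categorical core is the fact that two epimorphisms sharing a kernel determine the same quotient; combined with standard manipulations of epi-mono factorizations and the pullback/pushout calculus (in the spirit of \lemref{pushout}), everything else is formal.
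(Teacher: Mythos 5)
Your proof is correct, but it takes a genuinely different route from the paper's in parts (1) and (2). For (1), the paper stabilizes the kernel chain and then invokes the snake lemma to conclude that $f$ restricts to an isomorphism $\Imr f^{n} \to \Imr f^{n+1}$, so that $f^{n}$ is both monic and zero on $\Ker f^{n} \cap \Imr f^{n}$; you instead use the stabilization $\Ker f^{n} = \Ker f^{2n}$ together with the uniqueness of the quotient determined by a kernel (every epi is the cokernel of its kernel) to identify the corestriction of $f^{n}|_{\Imr f^{n}}$ with the canonical isomorphism $\Imr f^{n} \cong \Imr f^{2n}$, whence its kernel $\Ker f^{n} \cap \Imr f^{n}$ vanishes. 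The two arguments isolate the same fact --- $f^{n}$ acts monomorphically on $\Imr f^{n}$ --- but yours avoids the snake lemma. For (2), the paper gives a direct argument from the DCC on the image chain, and this is exactly where it needs the auxiliary \lemref{pushout}; you instead dualize (1), which makes \lemref{pushout} unnecessary. The duality step is the only place requiring real care, since (1) and (2) are not literally dual as written: one must translate subobjects, images, and intersections in $\mathscr{A}^{\mathrm{op}}$ into quotients, coimages, and pushouts in $\mathscr{A}$, so that the vanishing of $\Ker (f^{\mathrm{op}})^{n} \cap \Imr (f^{\mathrm{op}})^{n}$ becomes the vanishing of $A/(\Ker f^{n} + \Imr f^{n})$ --- and you carry this out correctly. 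Part (3) coincides with the paper's treatment (the sum map $\Ker f^{n} \oplus \Imr f^{n} \to A$ has kernel the intersection and image the union), and your observation that an endomorphism of $\Imr f^{n}$ with an invertible power is itself invertible is a clean way to finish the statement about $f$ being an isomorphism on $\Imr f^{n}$. Net effect: your version is somewhat more economical in machinery, at the cost of the bookkeeping inherent in the opposite-category translation; the paper's version keeps everything in $\mathscr{A}$ at the cost of proving the pushout lemma and citing the snake lemma.
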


\begin{proof}

(1) Since $A$ has ACC, the commutative diagram with exact rows
 \[
\xymatrix
	{
0  \ar[r] & \Ker f^{n} \ar[r] \ar@{>->}[dd] &  A  \ar[rr]^{f^{n}} \ar@{=}[dd]  \ar[dr] & & A  \ar[r] \ar[dd]^{f} & \Cok f^{n} \ar[r] \ar[dd] & 0\\
&&& \Imr f^{n} \ar[ur] \ar@{->>} [dd] && \\
0  \ar[r] & \Ker f^{n+1}  \ar[r] &  A  \ar '[r] [rr]^{f^{n+1}} \ar[dr] & & A  \ar[r] & \Cok f^{n+1} \ar[r] &  0\\
&&& \Imr f^{n+1} \ar[ur] && 
	}
\]
shows that the leftmost vertical morphism is an isomorphism for all $n \geq p$ for 
some~$p$. By the snake lemma\footnote{For a proof of the snake lemma in a general  abelian category, see~\cite{Fay}.}, the vertical morphism in the middle is also an isomorphism. The right-hand side of the diagram shows that this map is induced by $f$. In other words, the restriction of $f$ to $\Imr f^{n}$ is an isomorphism for all $n \geq p$. This implies that $f$ is an isomorphism whenever it is an epimorphism. Since the images of powers of $f$ form a descending chain, the restriction of any power of $f$ to $\Imr f^{n}$ is also an isomorphism for all $n \geq p$.  In particular, this is true for $f^{n}$. On the other hand, the restriction of $f^{n}$ to $\Ker f^{n}$ is zero. It now follows immediately that $\Ker f^{n} \cap \Imr f^{n}  \simeq 0$.
\medskip

(2) Since $A$ has the DCC, the inclusion $\Imr f^{n+1} \to \Imr f^{n}$ 
is an isomorphism for all $n \geq p$ for some~$p$. Therefore the inclusion  
$\Imr f^{2n} \to \Imr f^{n}$ is also an isomorphism. Thus in the commutative diagram 
\[
\xymatrix
	{
0  \ar[r] & K \ar[r] \ar[d] & \Imr f^{n}  \ar[r]^{f^{n}} \ar@{>->}[d]  &  \Imr f^{2n}  \ar[r]  
\ar[d]^{\simeq} & 0 \\
0  \ar[r] & \Ker f^{n}  \ar[r] &  A  \ar[r]^{f^{n}} & \Imr f^{n}  \ar[r] & 0 	
	}
\] 
with exact rows, the rightmost vertical map is an isomorphism. It now follows from Lemma~\ref{pushout} that the inclusion of $\Ker f^{n} + \Imr f^{n} $ in $A$ is an isomorphism. In particular, when $f$ is a monomorphism, it is also an epimorphism, and hence and isomorphism.
\medskip

(3) The union $\Ker f^{n} + \Imr f^{n}$ is the image of the morphism
\[ 
q: \Ker f^{n} \oplus \Imr f^{n} \to A
\]
induced by the inclusions of the summands in $A$. By (2), $q$ is an epimorphism. The kernel of $q$ is isomorphic (as an object, but not necessarily as a subobject) to $\Ker f^{n} \cap \Imr f^{n}$ \cite[p.~27]{B2}, which is isomorphic to $0$ by~(1). Since $q$ is an epimorphism and a monomorphism, it is an isomorphism. The rest has already been established.
\end{proof}

\section{Direct summands of functors}
Suppose $\mathscr{A}$ is an abelian category and $A$ an object of $\mathscr{A}$. We say that $A$ satisfies the \texttt{descending chain condition on images of nested endomorphisms} (DCC on INE) if any descending chain 
\[
A = A_{0} \supset A_{1} \supset A_{2} \supset \ldots
\]
of subobjects, where each $A_{i+1}$ is the image of some endomorphism of $A_{i}$, stabilizes (as a chain of subobjects). Apparently, any artinian object satisfies this condition.

Let $G : \mathscr{A}^{op} \times \mathscr{A} \to \mathtt{Ab}$ an additive (covariant) bifunctor with values in abelian groups. We make a further assumption that, for any object $A$, the map 
\[
(A,A) \to \mathrm{Nat}\big{(}G(A,-), G(A,-)\big{)} : f \mapsto G(f,-),
\]
sending a morphism to the corresponding natural transformation, is a surjection. 

\begin{theorem}\label{main}
Under the above assumptions, let $A$ be a noetherian object of 
$\mathscr{A}$ satisfying the DCC on INE. If $F$ is a direct summand of the functor  $G(A,-)$, then there is a subobject $B$ of 
$A$ such that $F \simeq G(B,-)$.  
 \end{theorem}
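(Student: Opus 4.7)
The plan is to realize $B$ as a stabilized iterated image inside $A$. Since $F$ is a direct summand of $G(A,-)$, there is an idempotent natural endo-transformation $e$ of $G(A,-)$ with image $F$. By the surjectivity hypothesis, $e = G(f,-)$ for some endomorphism $f$ of $A$; note that $f$ itself need not be idempotent, but idempotency of $e$ forces $G(f^{n},-) = e$ for every $n \geq 1$.

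Next, I would consider the descending chain
\[
A \supseteq \imr f \supseteq \imr f^{2} \supseteq \cdots
\]
of subobjects of $A$. A straightforward diagram chase (using the second diagram of Section~2) shows that $f$ restricts to an endomorphism $f_{n}$ of $\imr f^{n}$ whose image is $\imr f^{n+1}$, so the chain is of the type governed by DCC on INE. That hypothesis produces a $p$ with $B := \imr f^{p} = \imr f^{p+1}$, which makes $f_{p}: B \to B$ an epimorphism. Since $B$ is a subobject of the noetherian object $A$, it inherits the ACC, and Lemma~\ref{Fitting}(1) upgrades $f_{p}$ to an automorphism of $B$.

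Factor $f^{p} = \iota\pi$, where $\iota: B \hookrightarrow A$ is the inclusion and $\pi: A \twoheadrightarrow B$ the canonical epimorphism, and set $\alpha := G(\iota,-)$, $\beta := G(\pi,-)$. From $G(f^{p},-) = \beta\alpha$ I conclude $e = \beta\alpha$. The defining relation $\iota f_{p} = f\iota$ iterates to $\iota f_{p}^{p} = f^{p}\iota = \iota\pi\iota$, and monicity of $\iota$ gives $\pi\iota = f_{p}^{p}$; applying $G$ then yields $\alpha\beta = G(f_{p}^{p},-)$, an automorphism of $G(B,-)$. Hence $\beta$ is a split monomorphism (with retraction $(\alpha\beta)^{-1}\alpha$), so in the abelian functor category $\imr\beta \simeq G(B,-)$.

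Finally, $e = \beta\alpha$ forces $\imr e \subseteq \imr\beta$, while the identity $\beta = e\beta(\alpha\beta)^{-1}$ forces $\imr\beta \subseteq \imr e$. Therefore $F = \imr e = \imr\beta \simeq G(B,-)$. The main obstacle I anticipate is keeping the roles of the hypotheses clearly separated: the surjectivity assumption supplies $f$; the DCC on INE stabilizes the chain of iterated images; the noetherian hypothesis upgrades the resulting epimorphism to an isomorphism. Together they replace the Fitting lemma that would handle the finite-length case in a single step.
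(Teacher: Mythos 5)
Your proof is correct, and it takes a genuinely different route from the paper's. The paper argues iteratively: it splits off one image at a time, showing that $F$ is a direct summand of $G(\Imr f,-)$, and then \emph{re-invokes the surjectivity hypothesis on the smaller object} to obtain a new endomorphism of $\Imr f$, producing a chain $A \supset A_{1} \supset A_{2} \supset \ldots$ whose successive endomorphisms are a priori unrelated to one another; stabilization makes the last one an epimorphism, hence (by the ACC and Lemma~\ref{Fitting}(1)) an isomorphism, whence $F \simeq G(A_{n},-)$. You instead invoke the surjectivity hypothesis exactly once, for $A$ itself, and work with the powers of the single endomorphism $f$: the chain $\Imr f \supseteq \Imr f^{2} \supseteq \ldots$ is still of the kind governed by the DCC on INE because $f$ restricts to each $\Imr f^{n}$ with image $\Imr f^{n+1}$, and after stabilization your explicit computation $\imr e = \imr \beta \simeq G(B,-)$ (resting on $\alpha\beta = G(f_{p}^{p},-)$ being invertible) replaces the paper's inductive bookkeeping entirely. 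The individual steps all check out: $e = G(f^{n},-)$ for every $n$ by idempotency and contravariance in the first variable, $\pi\iota = f_{p}^{p}$ follows from monicity of $\iota$, and the two image comparisons $\imr e \subseteq \imr\beta$ and $\beta = e\beta(\alpha\beta)^{-1}$ are computed pointwise in $\mathtt{Ab}$, so no issues with the ambient functor category arise. What your version buys is a visibly weaker hypothesis -- you need only the endo-transformations of $G(A,-)$ for the single object $A$ to be induced by endomorphisms of $A$, not the corresponding statement for every subobject encountered along the way -- together with an explicit description of $B$ as a stabilized image $\Imr f^{p}$ of the original idempotent-inducing endomorphism. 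What the paper's iteration buys is that it never has to verify that the splitting data interact coherently across powers of $f$; it simply records at each stage that $F$ remains a direct summand, which is the form of the argument reused later for the $\Tor$ application.
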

 
\begin{proof}
 Let 
\[
\xymatrix
	{
	F \ar[r]^<<<<<{\iota} & G(A,-) \ar[r]^>>>>>{\pi} & F
	}
\]
be a factorization of the identity map. Then the composition
\[
\xymatrix
	{
	G(A,-) \ar[r]^<<<<<{\pi} & F \ar[r]^>>>>>{\iota} & G(A,-)
	}
\]
is an idempotent endomorphism of $G(A,-)$ and, by the assumption on $G$, $\iota \pi  = G(f,-)$ for some endomorphism $f : A \to A$. If $f$ is an epimorphism in $\mathscr{A}$, then by Lemma~\ref{Fitting}, it is an isomorphism. In such a case, since both $\iota \pi$ and $1 = \pi \iota$ are isomorphisms, $F$ and  $G(A,-)$ become isomorphic, and we are done. Thus we may assume that $f : A \to A$ is not an epimorphism.

For an  epi-mono factorization of $f$ (in $\mathscr{A}$)
\[
\xymatrix
	{
	A \ar@{->>}[r]^{\alpha} & \Imr f \ar@{>->}[r]^{\beta} & A
	}
\]
through its image, we have $G(f,-) = G(\alpha,-) G(\beta,-)$. Postcomposing 
$\iota \pi  = G(f,-)$ with $\iota$, we have $\iota = G(f,-) \iota$. Setting $\delta := G(\beta,-) \iota$, we have a commutative diagram
\[
\xymatrix
	{
	& F \ar[dl]_{\iota}  \ar '[d] [dd]_{\delta}\ar[dr]^{\iota} &\\
	G(A,-) \ar[rr]^>>>>>>>>>>{G(f,-)} \ar[dr]_{G(\beta, -)}& & G(A,-)\\
	& G(\Imr f, -) \ar[ur]_{G(\alpha,-)}&
	}
\] 
Since $G(\alpha,-)\delta = \iota$ and $\iota$ is a split monomorphism, the same is true for $\delta$. This shows that $F$ is a direct summand of $G(A_{1},-)$, where $A_{1} :=\Imr f$ is a proper subobject of $A$ and at the same time an image of $A$ under an endomorphism. Repeating the foregoing argument with $A_{1}$ in place of $A$, etc., we have a descending chain $A \supset A_{1} \supset A_{2} \supset \ldots$ such that each term is the image of an endomorphism of the previous term and $F$ is a direct summand of each $G(A_{i},-)$. Since this chain must stabilize, the last endomorphism $A_{n} \to A_{n}$ must be an epimorphism, hence an isomorphism. But then, as we saw in the beginning of the proof, 
$F \simeq G(A_{n},-)$.
\end{proof}

Recall that $\mathscr{A}$ is called a \texttt{length} category if all of its objects have finite length.

\begin{theorem}
 Suppose $\mathscr{A}$ is an abelian length category and $G : \mathscr{A}^{op} \times \mathscr{A} \to \mathtt{Ab}$ an additive (covariant) bifunctor with values in abelian groups. Suppose that for any object $A$ any natural transformation 
 $G(A,-) \to G(A,-)$ is induced by a suitable endomorphism of $A$. If $F$ is a direct summand of $G(A,-)$, then there is a subobject $B$ of $A$ such that 
 $F \simeq G(B,-)$. \qed
\end{theorem}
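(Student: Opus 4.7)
The plan is to recognize that this statement is essentially a specialization of Theorem~\ref{main} to the case of length categories, so the entire proof consists of verifying that the hypotheses of Theorem~\ref{main} are met for every object $A$ of $\mathscr{A}$.

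First, I would observe that an object of finite length satisfies both the ACC and the DCC on subobjects, so in particular $A$ is noetherian. Second, I would note the remark already made in the paper just before Theorem~\ref{main}: any artinian object automatically satisfies the DCC on images of nested endomorphisms, because such a chain is in particular a descending chain of subobjects of $A$ and must therefore stabilize. Since every object of a length category is artinian, the condition DCC on INE is automatic for $A$.

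Third, the hypothesis on $G$, namely that natural transformations $G(A,-) \to G(A,-)$ are induced from endomorphisms of $A$, is identical to the hypothesis required in Theorem~\ref{main}. With all three hypotheses verified, applying Theorem~\ref{main} to the given summand $F$ of $G(A,-)$ produces a subobject $B \subset A$ with $F \simeq G(B,-)$, which is exactly the conclusion.

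There is no real obstacle here, as the statement is a clean corollary: the only content is the passage from ``finite length'' to the pair of conditions ``noetherian $+$ DCC on INE''. If I were writing this up formally, the whole proof would be a single sentence invoking Theorem~\ref{main}, with perhaps a parenthetical reminder that finite length implies both ACC and (via DCC) the DCC on images of nested endomorphisms.
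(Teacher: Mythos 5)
Your proposal is correct and matches the paper's intent exactly: the theorem is stated with an immediate \qed precisely because it is the specialization of Theorem~\ref{main} obtained by noting that a finite-length object is noetherian and artinian, hence satisfies the DCC on images of nested endomorphisms. Your verification of the three hypotheses is the whole content of the (omitted) proof.
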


\begin{remark}\label{covariant}
The above theorem and corollary remain true, with obvious modifications, for the functor $G(A,-)$ if $G$ is an additive (covariant) functor  on $\mathscr{A} \times \mathscr{A}$.
\end{remark}

\section{Direct summands of some homological functors}
In this section, we shall give some immediate applications of Theorem~\ref{main}. We begin with the covariant $\Ext$ functors. The relevant result concerning natural transformations between such functors is due to Hilton-Rees~\cite{HR} and says that such transformations are in one-to-one, arrow-reversing correspondence with projective equivalence classes of homomorphisms between the corresponding contravariant arguments. This means that, by assigning to each object the corresponding covariant $\Ext$ functor, one has a full contravariant embedding of the category modulo projectives in the abelian category of additive functors on the original category with values in abelian groups, where the morphisms between such functors are natural transformations.\footnote{Strictly speaking, for this statement one has to assume that the class of objects of the original category is a set, otherwise the resulting ``category'' need not be a category. The reader should notice, however, that in the proof of the forthcoming Theorem~\ref{HR} we do use categorical concepts. To justify this, we have two options. The first one is to switch from categories to quasicategories, 
see~\cite[3.49-3.51]{AHS}. The other option is to notice that, in the proof of the theorem, we do all arguments componentwise. Accordingly, the formal definition of a functor category is not needed. The details are left to the reader.} We shall now give a short and straightforward proof of this result.

\begin{theorem}[Hilton-Rees]\label{HR}
Assume that the abelian category $\mathscr{A}$ has enough projectives and let $A$ and $B$ be objects of $\mathscr{A}$. Then the correspondence 
 \[
 (B,A) \to \big{(}\Ext^{1}(A,-), \Ext^{1}(B,-)\big{)} : f \mapsto \Ext^{1}(f,-)
 \]
 induces an isomorphism 
 \[
 (\underline{B,A}) \to \mathrm{Next}^{1,1}(A,B)
 \]
between the group $(\underline{B,A})$ of morphisms from $B$ to $A$ modulo the morphisms factoring through projectives and the group $\mathrm{Next}^{1,1}(A,B)$ of natural transformations from $\Ext^{1}(A,-)$ to $\Ext^{1}(B,-)$.
\end{theorem}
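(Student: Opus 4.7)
The plan is to reduce everything to the long exact $\Ext$ sequence coming from a projective presentation of $A$. Fix one, say $0 \to K \xrightarrow{i} P \xrightarrow{p} A \to 0$, and let $\xi_0 \in \Ext^1(A,K)$ denote the class it represents. Applying $\Hom(-, X)$ for varying $X$ yields the exact sequence of functors
\[
(P, -) \xrightarrow{(i, -)} (K, -) \to \Ext^1(A, -) \to 0;
\]
in particular, the epimorphism $(K, -) \twoheadrightarrow \Ext^1(A, -)$ corresponds via Yoneda to $\xi_0$. The long exact $\Ext$ sequence in the second variable gives
\[
(B, P) \xrightarrow{p_*} (B, A) \xrightarrow{\partial} \Ext^1(B, K) \xrightarrow{i_*} \Ext^1(B, P),
\]
where $\partial(f)$ is the pullback of $\xi_0$ along $f$, i.e.\ $\Ext^1(f, K)(\xi_0)$.

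First I would identify $\mathrm{Next}^{1,1}(A, B)$ with $\Ker i_*$. Any natural transformation $\eta \colon \Ext^1(A, -) \to \Ext^1(B, -)$ is uniquely determined by its pre-composition with the surjection $(K, -) \twoheadrightarrow \Ext^1(A, -)$, and such a pre-composition arises from an $\eta$ precisely when it vanishes on the image of $(i, -)$. By Yoneda the pre-composition corresponds to $\xi_\eta := \eta_K(\xi_0) \in \Ext^1(B, K)$, and vanishing on $(i, -)$ translates into $i_*(\xi_\eta) = 0$. Thus $\eta \mapsto \xi_\eta$ furnishes the isomorphism $\mathrm{Next}^{1,1}(A, B) \simeq \Ker i_*$.

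Second, I would match $f \mapsto \Ext^1(f, -)$ with $\partial$ under this identification. For $f \colon B \to A$ the natural transformation $\Ext^1(f, -)$ sends $\xi_0$ to $\Ext^1(f, K)(\xi_0) = \partial(f)$, so the two assignments coincide. Exactness then gives $\mathrm{Next}^{1,1}(A, B) \simeq \Ker i_* = \Imr \partial \simeq (B, A) / \Imr p_*$, and a short lifting argument shows that $\Imr p_*$ coincides with the subgroup of morphisms $B \to A$ factoring through an arbitrary projective object of $\mathscr{A}$: any factorization $B \to P' \to A$ with $P'$ projective lifts to $B \to P' \to P \to A$ using that $P'$ is projective and $p$ is an epimorphism. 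The announced isomorphism $(\underline{B, A}) \simeq \mathrm{Next}^{1,1}(A, B)$ follows.

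The main obstacle will be verifying carefully that the bijection $\mathrm{Next}^{1,1}(A, B) \simeq \Ker i_*$ of the first step really pairs up with the connecting homomorphism $\partial$ in the way just asserted. This is a naturality check hinging on the standard description of $\partial$ as pullback of an extension, and once it is settled the theorem falls out of exactness of the long $\Ext$ sequence.
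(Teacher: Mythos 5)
Your proposal is correct and follows essentially the same route as the paper: both identify $\mathrm{Next}^{1,1}(A,B)$ via Yoneda with the kernel of $\Ext^{1}(B,K)\to\Ext^{1}(B,P)$ coming from the functor sequence $(P,-)\to(K,-)\to\Ext^{1}(A,-)\to 0$, then use the long exact sequence to match this kernel with $\Cok\big((B,P)\to(B,A)\big)=(\underline{B,A})$ and check compatibility with $f\mapsto\Ext^{1}(f,-)$. The only difference is cosmetic: you spell out the lifting argument showing that the image of $(B,P)\to(B,A)$ consists of all maps factoring through an arbitrary projective, a point the paper leaves implicit.
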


\begin{proof}
An exact sequence
\[
\xymatrix
	{
\theta : &	 0 \ar[r] & \Omega A \ar[r] & P \ar[r] & A \ar[r] & 0
	}
\] 
with $P$ projective gives rise to an exact sequence of functors 
\[
\xymatrix
	{
	0 \ar[r] & (A,-) \ar[r] & (P,-) \ar[r] & (\Omega A, -) \ar[r] & \Ext^{1}(A,-) \ar[r] & 0.
	}
\] 
Taking functor morphisms into $\Ext^{1}(B,-)$ and applying Yoneda's lemma yields a commutative diagram with an exact top row
 \[
\xymatrix
	{
	0 \ar[r] & \mathrm{Next}^{1,1}(A,B) \ar[r] & \big{(}(\Omega A, -), \Ext^{1}(B,-)\big{)} \ar[d]^{\simeq} \ar[r] 	& \big{(}(P, -), \Ext^{1}(B,-)\big{)} \ar[d]^{\simeq}\\
	& & \Ext^{1}(B, \Omega A) \ar[r] &  \Ext^{1}(B, P)
	}
\] 
In particular, we see that $\mathrm{Next}^{1,1}(A,B)$ is a set. The horizontal morphism on the bottom is part of a long exact sequence and therefore its kernel is isomorphic to $\Cok ((B,P) \to (B,A))$, which is $(\underline{B,A})$. Now, $\Ext^{1}(f, -) \in \mathrm{Next}^{1,1}(A,B)$ is mapped by the horizontal homomorphism to the right multiplication by $\theta f$. Under the Yoneda isomorphism, this transformation is sent to $\theta f$, which is also the image of $f \in (B,A)$ in $\Ext^{1}(B, \Omega A)$. Thus, the inverse of the constructed isomorphism is precisely 
$[f] \mapsto \Ext^{1}(f,-)$.
\end{proof}

The above proof also works for natural transformations between contravariant $\Ext$-functors. In that case however, one needs to replace $\Hom$ modulo projectives by 
$\Hom$ modulo injectives, denoted by $(\overline{A,B})$. We thus have the Hilton-Rees theorem for the contravariant Ext functor.

\begin{theorem}\label{HRprime}
Assume that the abelian category $\mathscr{A}$ has enough injectives and let $A$ and $B$ be objects of $\mathscr{A}$. Then the correspondence 
 \[
 (A,B) \to \big{(}\Ext^{1}(-, A), \Ext^{1}(-, B)\big{)} : f \mapsto \Ext^{1}(-, f)
 \]
 induces an isomorphism 
 \[
 (\overline{A,B}) \to \mathrm{Next}(A,B)^{1,1}
 \]
between the group $(\overline{A, B})$ of morphisms from $A$ to $B$ modulo the morphisms factoring through injectives and the group  $\mathrm{Next}(A,B)^{1,1}$ of natural transformations from $\Ext^{1}(-, A)$ to $\Ext^{1}(-, B)$. \qed
\end{theorem}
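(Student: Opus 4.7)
The plan is to dualize the proof of Theorem~\ref{HR} in an entirely formal way, replacing the projective presentation of the first argument by an injective copresentation. Using that $\mathscr{A}$ has enough injectives, I would begin with a short exact sequence
$$\eta :\quad 0 \to A \to I \to \Sigma A \to 0$$
with $I$ injective. Applying the contravariant representables $\Hom(-,X)$ termwise yields an exact sequence
$$0 \to (-,A) \to (-,I) \to (-,\Sigma A) \to \Ext^{1}(-,A) \to 0$$
in the category of additive contravariant functors on $\mathscr{A}$, which plays the role of a projective presentation of $\Ext^{1}(-,A)$ in that functor category.

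Next I would apply $\mathrm{Nat}\bigl(-, \Ext^{1}(-,B)\bigr)$ to this presentation and invoke the contravariant version of Yoneda's lemma, identifying $\mathrm{Nat}\bigl((-,X), \Ext^{1}(-,B)\bigr) \simeq \Ext^{1}(X,B)$. This produces a left-exact sequence
$$0 \to \mathrm{Next}(A,B)^{1,1} \to \Ext^{1}(\Sigma A, B) \to \Ext^{1}(I, B),$$
which in particular shows that $\mathrm{Next}(A,B)^{1,1}$ is a set. To compute this kernel, I would apply $\Hom(-,B)$ to $\eta$ itself and read off the kernel of the rightmost map from the resulting long exact sequence as $\Cok\bigl((I,B) \to (A,B)\bigr)$. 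Because $I$ is injective and $A \hookrightarrow I$ is a monomorphism, every morphism $A \to B$ that factors through \emph{any} injective already factors through $I$ (extend along $A \hookrightarrow I$), so this cokernel coincides with $(\overline{A,B})$.

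The last step is to trace a representative $f : A \to B$ of a class in $(\overline{A,B})$ through the two Yoneda identifications, exactly as in the concluding paragraph of Theorem~\ref{HR}: the element of $\Ext^{1}(\Sigma A, B)$ corresponding to $f$ is the pushout $f_{*}\eta$, and unwinding the Yoneda isomorphism on the other side confirms that the inverse of the constructed isomorphism is $[f] \mapsto \Ext^{1}(-, f)$. The main obstacle, such as it is, is bookkeeping around variance — verifying that an injective copresentation of $A$ (rather than of $B$) is the correct starting point, and that the contravariant Yoneda identifications are oriented correctly; these issues are purely mechanical, which is presumably why the author relegates the result to a \texttt{qed} with no proof.
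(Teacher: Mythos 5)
Your proposal is correct and is exactly the paper's intended argument: the paper proves Theorem~\ref{HR} via a projective presentation of $A$ and then states that "the above proof also works" for the contravariant case after replacing $\Hom$ modulo projectives by $\Hom$ modulo injectives, which is precisely the dualization you carry out (injective copresentation $\eta$, contravariant Yoneda, and the identification of $\Ker\big(\Ext^{1}(\Sigma A,B)\to\Ext^{1}(I,B)\big)$ with $\Cok\big((I,B)\to(A,B)\big)=(\overline{A,B})$). All the steps, including the observation that factoring through any injective is equivalent to factoring through $I$, are sound.
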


Combining Theorem~\ref{HR} with Theorem~\ref{main}, we have
\begin{proposition}
Let $A$ be a noetherian object satisfying the DCC on INE in an abelian category 
$\mathscr{A}$ with enough projectives. If $F$ is a direct summand of the functor  $\Ext^{1}(A,-)$, then there is a subobject $B$ of $A$ such that $F \simeq \Ext^{1}(B,-)$. In particular, the result is true when $A$ is of finite length. \qed
\end{proposition}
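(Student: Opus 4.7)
The plan is to reduce the statement to Theorem~\ref{main} applied to the bifunctor $G := \Ext^{1}(-,-) : \mathscr{A}^{op} \times \mathscr{A} \to \mathtt{Ab}$. Since $\mathscr{A}$ has enough projectives, this $G$ is a well-defined additive bifunctor with values in abelian groups, so all that remains is to check the one nontrivial hypothesis of Theorem~\ref{main}, namely that the assignment
\[
(A,A) \to \mathrm{Nat}\bigl(\Ext^{1}(A,-), \Ext^{1}(A,-)\bigr), \qquad f \mapsto \Ext^{1}(f,-),
\]
is surjective.

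This is where Theorem~\ref{HR} does the work. Applied with $B = A$, it furnishes an \emph{isomorphism} $(\underline{A,A}) \xrightarrow{\sim} \mathrm{Next}^{1,1}(A,A)$ induced by $[f] \mapsto \Ext^{1}(f,-)$. Composing with the canonical surjection $(A,A) \twoheadrightarrow (\underline{A,A})$ shows that every natural endo-transformation of $\Ext^{1}(A,-)$ is of the form $\Ext^{1}(f,-)$ for some endomorphism $f$ of $A$, which is exactly the hypothesis demanded by Theorem~\ref{main}.

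With this verified, Theorem~\ref{main} applies directly and yields a subobject $B$ of $A$ such that $F \simeq \Ext^{1}(B,-)$, proving the main assertion. For the ``in particular'' clause, I would note that an object of finite length has both ACC and DCC on all subobjects; ACC gives that $A$ is noetherian, while DCC on subobjects trivially implies the DCC on images of nested endomorphisms, since the latter is a restriction to a special class of descending chains. Hence the finite-length hypothesis is a strictly stronger form of the hypothesis on $A$ in the main assertion, and the conclusion follows from it.

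There is essentially no genuine obstacle here; the proof is a two-line invocation once one observes that Theorem~\ref{HR} gives more than one needs, namely a bijection on the quotient $(\underline{A,A})$, from which surjectivity on the level of $(A,A)$ is automatic. The only mild care required is to check the finite-length reduction, which is immediate.
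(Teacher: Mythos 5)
Your proposal is correct and is exactly the paper's argument: the paper derives this proposition by ``combining Theorem~\ref{HR} with Theorem~\ref{main},'' i.e.\ using the Hilton--Rees isomorphism $(\underline{A,A}) \to \mathrm{Next}^{1,1}(A,A)$ to verify the surjectivity hypothesis of Theorem~\ref{main}, just as you do. Your handling of the finite-length case (ACC gives noetherian, DCC on subobjects gives DCC on INE) also matches the paper's remarks.
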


\begin{corollary}
 Let $\Lambda$ be an artin algebra, $\Lambda$-$\mathrm{mod}$ the category of finitely generated (left) $\Lambda$-modules, and 
 $F : \Lambda$-$\mathrm{mod} \longrightarrow \mathtt{Ab}$ an additive functor. If $F$ is a direct summand of $\Ext^{1}(A, -)$, where $A \in \Lambda$-$\mathrm{mod}$, then there exists a submodule $B$ of~$A$
 such that $F \simeq \Ext^{1}(B, -)$. \qed
\end{corollary}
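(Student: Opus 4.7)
The plan is to reduce this statement directly to the preceding Proposition by checking its hypotheses for the category $\Lambda\text{-}\mathrm{mod}$ and the module $A$. There is no new content required beyond verifying three standard facts about artin algebras.

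First, I would verify that $\Lambda\text{-}\mathrm{mod}$ is an abelian category with enough projectives. Since an artin algebra is (two-sided) noetherian, kernels and cokernels of morphisms between finitely generated modules remain finitely generated, making $\Lambda\text{-}\mathrm{mod}$ abelian. Every finitely generated module is a quotient of some $\Lambda^{n}$, which itself lies in $\Lambda\text{-}\mathrm{mod}$ and is projective, so there are enough projectives.

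Second, I would recall the standard fact that every finitely generated module over an artin algebra has finite length. By definition, $\Lambda$ is finitely generated as a module over an artinian commutative central subring $R$; consequently $\Lambda$ has finite length as an $R$-module, and hence also as a $\Lambda$-module. Any $A \in \Lambda\text{-}\mathrm{mod}$ is a quotient of $\Lambda^{n}$ for some $n$, and therefore has finite length as well. In particular, $A$ is noetherian, and being artinian, it satisfies the DCC on images of nested endomorphisms trivially, since the entire lattice of subobjects already has DCC.

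With these observations in place, the hypotheses of the preceding Proposition are met, so there exists a subobject $B \subseteq A$ with $F \simeq \Ext^{1}(B,-)$. Since the statement is merely a specialization of that Proposition to the artin algebra setting, there is no substantive obstacle; the only care needed is to invoke the definition of an artin algebra to guarantee the finite-length property of finitely generated modules, and to note that $\Lambda\text{-}\mathrm{mod}$ is closed under the standard abelian-category operations.
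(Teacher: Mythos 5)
Your proposal is correct and is exactly the reduction the paper intends: the corollary is stated with a \qed because it follows immediately from the preceding Proposition once one notes that $\Lambda$-$\mathrm{mod}$ is abelian with enough projectives and that every finitely generated module over an artin algebra has finite length. Your verification of those standard facts is accurate, so there is nothing to add.
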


\begin{remark}
 As we mentioned in the introduction, the last corollary implies that the covariant Ext functors are the only injectives in the category of finitely presented functors 
$\Lambda$-$\mathrm{mod} \longrightarrow \mathtt{Ab}$ determined by short exact sequences. On the other hand, in general, these functors need not be injective in the ambient category of all finitely presented functors. It is not difficult to prove that if $\Lambda$ is (left) hereditary, then the functors $\Ext^{1}(B, -)$ are injective in the category of all finitely presented functors. However, even in that case, there must be injectives not of the form $\Ext^{1}(B, -)$. To see that, 
recall~\cite{G} that the category of finitely presented functors on an abelian category with enough projectives has enough injectives. Assuming that all injectives in f.p.$(\Lambda$-$\mathrm{mod} , \mathtt{Ab})$ are the Ext functors,
for any nonzero module $\Lambda$-module $A$ we have a monomorphism  
$(A,-) \longrightarrow \Ext^{1}(B, -)$ into a suitable injective. Evaluating it on the injective envelope of $A$, we then have a zero map with a nonzero domain, a contradiction.\footnote{The same argument shows that the category of finitely presented functors determined by short exact sequences does not have enough injectives. In fact this is always true: just use the fact that any injective is a direct summand of a suitable Ext functor~\cite{Aus66} and run the above argument.}
\end{remark}
\medskip

In the case when the endomorphism ring of $A$ is artinian as an abelian group, the assumption that $\mathscr{A}$ have enough projectives can be dropped. To this end,  recall the following result of Oort~\cite[p. 561]{O63}:

\begin{theorem}
 Let $\mathscr{A}$ be an abelian category all of whose objects are artinian. If
 \[
 \lambda : \Ext^{1} (B,-) \to \Ext^{1} (A,-)
 \]
 is a natural map and $\Hom(A,B)$ is an artinian group, then there exists a morphism 
 $\alpha \in \Hom(A,B)$ which induces $\lambda$. \qed
\end{theorem}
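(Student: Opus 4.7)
Absent enough projectives, the Hilton-Rees argument of \thmref{HR} is unavailable, and $\alpha$ must be reconstructed directly from how $\lambda$ acts on Yoneda extensions, using the artinian hypotheses as compensation. To each extension
$\xi : 0 \to X \to E \xrightarrow{\pi} B \to 0$ representing an element of $\Ext^{1}(B,X)$, the component $\lambda_{X}$ assigns an extension $\lambda_{X}(\xi) : 0 \to X \to E' \to A \to 0$. If $\lambda$ were induced by some $\alpha \in \Hom(A,B)$, then $\lambda_{X}(\xi)$ would be the pullback $\alpha^{*}\xi$; accordingly, set
\[
S(\xi) := \{\,\alpha \in \Hom(A,B) \mid \alpha^{*}\xi = \lambda_{X}(\xi)\,\},
\]
which, when non-empty, is a coset of the subgroup $H(\xi) := \Imr\bigl(\Hom(A,E) \to \Hom(A,B)\bigr)$. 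The goal is to locate a single $\alpha$ in $\bigcap_{\xi} S(\xi)$.

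First, I would exhibit some $\alpha_{\xi} \in S(\xi)$ by direct comparison of short exact sequences; this uses that every object of $\mathscr{A}$ is artinian, so the Fitting-type arguments of \lemref{Fitting} apply to the endomorphisms arising between the various terms and allow one to construct a map $E' \to E$, hence the desired right-hand map $\alpha_{\xi} : A \to B$. Naturality of $\lambda$ in the covariant argument then shows that whenever $\xi' = f_{*}\xi$ for a morphism $f : X \to Y$, one has $S(\xi) \subseteq S(\xi')$, so the sets $\{S(\xi)\}$ form a filtered inverse system under pushouts.

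Next, I would pass to the limit using the artinian hypothesis on $\Hom(A,B)$: the subgroups $H(\xi)$ form a descending family in the artinian abelian group $\Hom(A,B)$, hence stabilize at some $H_{0}$; the non-empty cosets $S(\xi)$ then eventually lie in, and must coincide as, a single coset of $H_{0}$. Any representative $\alpha$ of that common coset satisfies $\alpha^{*}\xi = \lambda_{X}(\xi)$ on a cofinal subsystem, and an elementary naturality argument propagates the equality to every extension of $B$ by every object, yielding $\lambda = \Ext^{1}(\alpha,-)$ componentwise.

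The main obstacle, as I see it, is the existence step: in \thmref{HR} one uses a projective cover of $B$ to lift $1_{B}$ and thereby obtain $\alpha_{\xi}$ from a single universal extension, whereas here $\alpha_{\xi}$ has to be constructed extension-by-extension and then glued. The artinian assumption on objects of $\mathscr{A}$ supplies the needed rigidity inside each individual extension (via the Fitting lemma), while the artinian assumption on $\Hom(A,B)$ is what makes the gluing terminate without transfinite machinery. Verifying that these two chain conditions together suffice to pin down $\alpha$ uniquely modulo the ambiguity already present in \thmref{HR}, namely $\Imr(\Hom(A,E) \to \Hom(A,B))$ for the ``terminal'' extension, is the delicate point of the argument.
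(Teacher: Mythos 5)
The paper does not prove this statement; it is quoted from Oort~\cite{O63} and stated with a \qed, so your argument has to stand entirely on its own. Your skeleton --- the cosets $S(\xi)$ of $H(\xi)=\Imr\bigl(\Hom(A,E)\to\Hom(A,B)\bigr)$, a directed system of such cosets, and stabilization via the artinian hypothesis on $\Hom(A,B)$ --- is the right one, but both load-bearing steps are left unproved, and the mechanism you offer for the first is wrong. The existence of some $\alpha_{\xi}\in S(\xi)$ cannot come from Lemma~\ref{Fitting}: that lemma concerns endomorphisms of a single object and gives no map $E'\to E$ between the middle terms of two extensions with different right-hand ends $A$ and $B$; ``direct comparison of short exact sequences'' produces nothing here. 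The correct argument is pure naturality: writing $\xi\colon 0\to X\xrightarrow{i}E\to B\to 0$, the class $\xi$ is $\delta(1_{B})$ for the connecting map of its own sequence, so $i_{*}\xi=0$ in $\Ext^{1}(B,E)$; naturality of $\lambda$ in the covariant variable gives $i_{*}\lambda_{X}(\xi)=\lambda_{E}(i_{*}\xi)=0$, and exactness of $\Hom(A,E)\to\Hom(A,B)\xrightarrow{\delta'}\Ext^{1}(A,X)\to\Ext^{1}(A,E)$ then yields $\lambda_{X}(\xi)=\delta'(\alpha_{\xi})=\alpha_{\xi}^{*}\xi$. Note that this step uses neither chain condition, which should make you suspicious of where your write-up actually invokes the hypothesis that all objects of $\mathscr{A}$ are artinian.

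The second gap is directedness. Closure under pushforward, i.e.\ $S(\xi)\subseteq S(f_{*}\xi)$, does not make the family $\{S(\xi)\}$ filtered: for $\xi_{1}\in\Ext^{1}(B,X_{1})$ and $\xi_{2}\in\Ext^{1}(B,X_{2})$ neither is a pushforward of the other, and you need a single $\xi_{3}$ with $S(\xi_{3})\subseteq S(\xi_{1})\cap S(\xi_{2})$. This requires the fibered product $E_{1}\times_{B}E_{2}$, an extension of $B$ by $X_{1}\oplus X_{2}$ whose pushforwards along the two projections recover $\xi_{1}$ and $\xi_{2}$. Without it the subgroups $H(\xi)$ are just some family in $\Hom(A,B)$, not a downward-directed one; the artinian hypothesis then gives neither a common minimal $H_{0}$ nor the containment $S(\xi_{0})\subseteq S(\xi)$ for all $\xi$ that your final ``propagation'' sentence silently assumes (cosets of different, incomparable subgroups need not nest). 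With directedness supplied, the endgame does work: choose $\xi_{0}$ with $H(\xi_{0})$ minimal among the $H(\xi)$ (possible in an artinian group for a directed family); for any $\xi$ pick $\xi'$ dominating both, so $H(\xi')=H(\xi_{0})$ by minimality and $S(\xi')$ is a coset of $H(\xi_{0})$ contained in the coset $S(\xi_{0})$, whence $S(\xi')=S(\xi_{0})\subseteq S(\xi)$; any $\alpha\in S(\xi_{0})$ then induces $\lambda$. As written, however, the proposal asserts rather than proves the two facts on which everything rests.
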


Combining this result with Theorem~\ref{main}, we have
\begin{corollary}
Suppose $\mathscr{A}$ is an abelian length category and and $A$ is an object whose endomorphism ring is artinian as an abelian group. If $F$ is a direct summand of the functor  $\Ext^{1}(A,-)$, then there is a subobject $B$ of $A$ such that $F \simeq \Ext^{1}(B,-)$. \qed
\end{corollary}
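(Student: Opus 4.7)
The plan is to deduce this corollary by checking that the hypotheses of \thmref{main} are satisfied when $G$ is taken to be the bifunctor $\Ext^{1}(-,-):\mathscr{A}^{op}\times\mathscr{A}\to\mathtt{Ab}$. There are three things to verify for the object $A$: that $A$ is noetherian, that $A$ satisfies the DCC on INE, and that every natural self-transformation of $\Ext^{1}(A,-)$ is induced by an endomorphism of $A$.

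First I would note that since $\mathscr{A}$ is a length category, every object (in particular $A$) has finite length and is therefore both noetherian and artinian. As observed in the paragraph preceding \thmref{main}, any artinian object automatically satisfies the DCC on images of nested endomorphisms, so the first two hypotheses are immediate. For the third, I would invoke Oort's theorem (stated just above the corollary) with $B=A$: the hypothesis that all objects of $\mathscr{A}$ are artinian holds because $\mathscr{A}$ is a length category, and the hypothesis that $\Hom(A,B)=\End(A)$ is an artinian abelian group is precisely the hypothesis of the corollary. Oort's theorem then yields, for each natural transformation $\lambda:\Ext^{1}(A,-)\to\Ext^{1}(A,-)$, an $\alpha\in\End(A)$ inducing $\lambda$; in other words, the map
\[
\End(A)\longrightarrow\mathrm{Nat}\bigl(\Ext^{1}(A,-),\Ext^{1}(A,-)\bigr),\qquad f\mapsto\Ext^{1}(f,-),
\]
is surjective, which is exactly the hypothesis required by \thmref{main}.

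With all hypotheses in place, \thmref{main} applied to $G=\Ext^{1}$ produces a subobject $B$ of $A$ with $F\simeq\Ext^{1}(B,-)$, completing the proof. There is no genuine obstacle here beyond bookkeeping: the content of the corollary is that Oort's theorem provides the surjectivity condition of \thmref{main} in precisely the setting where enough projectives are unavailable but the endomorphism ring is artinian, so the two results dovetail. The only mild care needed is to check that Oort's statement, which is phrased with possibly different objects $A$ and $B$, specializes correctly to the case $B=A$ used here.
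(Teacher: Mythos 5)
Your proposal is correct and is essentially identical to the paper's own (unwritten) argument: the corollary is stated with a \qed precisely because it follows by combining Oort's theorem (which supplies the surjectivity of $\End(A)\to\mathrm{Nat}\bigl(\Ext^{1}(A,-),\Ext^{1}(A,-)\bigr)$ in the absence of enough projectives) with Theorem~\ref{main}, exactly as you describe. The one point you share with the paper in leaving implicit is that the iteration in the proof of Theorem~\ref{main} also uses the corresponding surjectivity for the intermediate subobjects $A_{i}$, which via Oort's theorem would require $\End(A_{i})$ to be artinian as well; this is a feature of the paper's own presentation rather than a defect of your argument.
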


Combining Theorem~\ref{HRprime} with Theorem~\ref{main}, we have
\begin{proposition}
Let $A$ be an object of finite length in an abelian category $\mathscr{A}$ with enough injectives. If $F$ is a direct summand of the functor  $\Ext^{1}(-,A)$, then there is a subobject $B$ of $A$ such that $F \simeq \Ext^{1}(-,B)$.  \qed
\end{proposition}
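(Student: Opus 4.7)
The plan is to run exactly the argument that established the preceding proposition (for covariant $\Ext$), but with the roles of the two arguments of $\Ext^{1}$ swapped. That is, I would apply the ``$G(-,A)$ version'' of Theorem~\ref{main} announced in the introduction (and implicit in Remark~\ref{covariant}) to the bifunctor $G(X,Y) := \Ext^{1}(X,Y)$, now treating $A$ as a fixed entry in the second (covariant) slot.

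First, I would verify the surjectivity hypothesis of Theorem~\ref{main} in this dual form: every natural transformation $\Ext^{1}(-,A) \to \Ext^{1}(-,A)$ must be of the shape $\Ext^{1}(-,f)$ for some $f \in \End(A)$. This is a direct consequence of Theorem~\ref{HRprime}, which identifies the group of such natural transformations with $(\overline{A,A})$, so any representative endomorphism in the class induces the given transformation. Second, since $A$ has finite length, it is noetherian and artinian, and any artinian object satisfies the DCC on images of nested endomorphisms, so the hypotheses of the dual Theorem~\ref{main} are satisfied.

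Finally, I would spell out (or just invoke) the dual image-iteration argument. Given $\iota : F \to \Ext^{1}(-,A)$ and $\pi : \Ext^{1}(-,A) \to F$ with $\pi\iota = 1_{F}$, the idempotent $\iota\pi$ equals $\Ext^{1}(-,f)$ for some $f \in \End(A)$. If $f$ is an epimorphism, Lemma~\ref{Fitting} makes it an isomorphism and $F \simeq \Ext^{1}(-,A)$. Otherwise, choose an epi-mono factorization $f = \beta\alpha$ through $\Imr f$, yielding $\Ext^{1}(-,f) = \Ext^{1}(-,\beta)\circ\Ext^{1}(-,\alpha)$. Setting $\sigma := \Ext^{1}(-,\alpha)\iota$ and $\rho := \pi\,\Ext^{1}(-,\beta)$ gives $\rho\sigma = \pi\,\Ext^{1}(-,f)\,\iota = \pi\iota\pi\iota = 1_{F}$, so $F$ is a direct summand of $\Ext^{1}(-,\Imr f)$. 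Iterating produces a descending chain of subobjects of $A$ which must stabilize by DCC; at the stabilization step the relevant endomorphism is epi on a finite-length object, hence an isomorphism, and the terminal subobject $B$ of the chain satisfies $F \simeq \Ext^{1}(-,B)$.

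The main ``obstacle'' is purely organizational: one needs to observe that, when the fixed argument of $G$ is the covariant one, the epi-mono factorization of $f$ produces a split monomorphism $\sigma$ into $G(-,\Imr f)$ rather than the split monomorphism $\delta$ out of $F$ into $G(\Imr f,-)$ used in the original proof. No new conceptual ingredient is required.
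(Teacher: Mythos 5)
Your proposal is correct and follows the paper's own route: the paper proves this proposition precisely by combining Theorem~\ref{HRprime} (Hilton--Rees for the contravariant $\Ext$, giving the required surjectivity of $\End(A)\to\mathrm{Nat}$) with the covariant-fixed-argument version of Theorem~\ref{main} noted in Remark~\ref{covariant}. Your explicit verification that the epi--mono factorization still splits $F$ off of $\Ext^{1}(-,\Imr f)$ (via $\rho\sigma = \pi\,\Ext^{1}(-,f)\,\iota = 1_{F}$) is exactly the ``obvious modification'' the paper leaves to the reader.
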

\bigskip

We now turn our attention to the covariant $\Hom$ modulo projectives functor. By Yoneda's lemma, natural transformations from $(\underline{A^{\prime},-})$ to $(\underline{A,-})$ are in a functorial bijection with $(\underline{A,A^{\prime}})$. Thus, Theorem~\ref{main} yields

\begin{proposition}
Let $A$ be an object of finite length in an abelian category $\mathscr{A}$ with enough projectives. If $F$ is a direct summand of the functor  $(\underline{A,-})$, then there is a subobject $B$ of $A$ such that $F \simeq (\underline{B,-})$.  \qed
\end{proposition}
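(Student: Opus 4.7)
The plan is to invoke Theorem~\ref{main} with the covariant bifunctor
$G : \mathscr{A}^{op} \times \mathscr{A} \to \mathtt{Ab}$ defined by
$G(X,Y) := (\underline{X,Y})$, the Hom modulo morphisms factoring through projectives. That this is a genuine bifunctor follows from the observation that a morphism factoring through a projective remains so after pre- or post-composition with an arbitrary morphism, so the quotient is functorial in each argument.

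To apply Theorem~\ref{main}, two hypotheses need checking. First, the conditions on $A$: since $A$ has finite length it is noetherian, and being artinian it certainly satisfies the DCC on images of nested endomorphisms (as remarked immediately after the definition of that property at the start of \S3). Second, the surjectivity hypothesis on $G$ requires that
\[
(A,A) \to \mathrm{Nat}\bigl(G(A,-), G(A,-)\bigr), \quad f \mapsto G(f,-),
\]
be surjective. This is precisely the Yoneda-type identification recalled in the paragraph preceding the proposition: natural transformations $(\underline{A,-}) \to (\underline{A,-})$ are in functorial bijection with $(\underline{A,A})$, and every class in $(\underline{A,A})$ is represented by some honest endomorphism $f : A \to A$, for which the induced natural transformation is exactly $G(f,-)$. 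Hence the displayed map is surjective (indeed, its cokernel factors through morphisms that factor through projectives, which act trivially).

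With both hypotheses verified, Theorem~\ref{main} yields a subobject $B$ of $A$ with $F \simeq G(B,-) = (\underline{B,-})$, as desired. There is no genuine obstacle to overcome here: the substantive content has already been isolated in Theorem~\ref{main} together with the Yoneda identification of natural endotransformations of $(\underline{A,-})$, and the proposition is the specialization of that machinery to stable Hom. If one wished to name a subtle point, it would be confirming that the surjectivity requirement is onto the \emph{unstable} Hom-set $(A,A)$ rather than onto $(\underline{A,A})$, but this is automatic since any class admits an honest representative.
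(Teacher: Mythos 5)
Your proof is correct and follows exactly the route the paper takes: the proposition is stated as an immediate consequence of Theorem~\ref{main} once natural endotransformations of $(\underline{A,-})$ are identified with $(\underline{A,A})$ via Yoneda, every class of which lifts to an honest endomorphism in $(A,A)$. Your write-up merely makes explicit the routine verifications (bifunctoriality of stable Hom, finite length implying noetherian and DCC on INE) that the paper leaves to the reader.
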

\bigskip

Next, we look at direct summands of the functor $\Tor_{1}(A,-)$. For that, we fix a ring $\Lambda$ and view $\Tor_{1}^{\Lambda}(-,-)$ as a bifunctor  f.p.\,($\mathrm{mod}$-$\Lambda$) $\times$ $\Lambda$-$\mathrm{mod}$ $\to \mathtt{Ab}$, whose first argument is taken from the category of finitely presented right $\Lambda$-modules. It is well-known (and is not difficult to prove) that, for any finitely presented right $\Lambda$-module $A$, we have a functor isomorphism $\Tor_{1}(A,-) \simeq (\underline{\Tr A, -})$, where $\Tr A$ denotes the transpose of $A$. If $A^{\prime}$ is another finitely presented right $\Lambda$-module, then, by Yoneda's lemma applied to the category of modules modulo projectives, 
\[
\mathrm{Nat}\big{(}\Tor_{1}(A,-), \Tor_{1}(A^{\prime},-)\big{)} 
\simeq \big{(}(\underline{\Tr A, -}), (\underline{\Tr A^{\prime}, -})\big{)}
\simeq (\underline{\Tr A^{\prime}, \Tr A}),
\]
But the transpose, viewed as a functor on the category of finitely presented modules modulo projectives, is a duality, and therefore the latter is isomorphic to 
$(\underline{A, A^{\prime}})$. We now have, in view of Remark~\ref{covariant}, the following 
\begin{proposition}
Let $A$ be a finitely presented right $\Lambda$-module of finite length. If~$F$ is a direct summand of the functor  $\Tor_{1}(A,-)$, then there is a finitely presented submodule 
$B$ of $A$ (automatically of finite length) such that $F \simeq \Tor_{1}(B,-)$.
\end{proposition}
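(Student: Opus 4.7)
The plan is to invoke Theorem~\ref{main}, in its covariant form from Remark~\ref{covariant}, applied to the bifunctor
\[
G = \Tor_{1}^{\Lambda} : \mathrm{f.p.}(\mathrm{mod}\text{-}\Lambda) \times \Lambda\text{-}\mathrm{mod} \longrightarrow \mathtt{Ab},
\]
with $A$ held fixed in the first slot. Three hypotheses have to be checked. That $A$ is noetherian and satisfies the DCC on images of nested endomorphisms are both immediate from the finite-length assumption, which gives ACC and DCC on all subobjects of $A$ at once. The third hypothesis -- that every natural self-transformation of $\Tor_{1}(A,-)$ is induced by an endomorphism of $A$ -- is the content of the isomorphism
\[
\mathrm{Nat}\bigl(\Tor_{1}(A,-), \Tor_{1}(A,-)\bigr) \simeq (\underline{A, A})
\]
established just above the proposition: every class on the right is represented by an honest $f \in \End(A)$, and the isomorphism carries $[f]$ to $\Tor_{1}(f,-)$, so the map $\End(A) \to \mathrm{Nat}(G(A,-), G(A,-))$ is in fact surjective.

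With the hypotheses in place, Theorem~\ref{main} delivers a subobject $B$ of $A$ with $F \simeq \Tor_{1}(B,-)$. Inspection of the proof of Theorem~\ref{main} shows that this $B$ is built as $\Imr(f^{n})$ for a suitable endomorphism $f$ of $A$ after finitely many iterations of the reduction step, hence lives in f.p.(mod-$\Lambda$) and is a finitely presented submodule of $A$. Its finite length is inherited from that of $A$, yielding the parenthetical assertion.

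The main obstacle is a matter of setting rather than technique: Theorem~\ref{main} is stated for an abelian category, while f.p.(mod-$\Lambda$) is abelian only under right coherence of $\Lambda$. In the situation at hand this is inconsequential -- the whole argument stays within subobjects of the finite-length module $A$, which form a length subcategory closed under all the operations (kernels, images, pushouts, epi-mono factorizations) invoked in the proofs of Theorem~\ref{main} and of the Fitting Lemma. Thus the reduction can be carried out inside this subcategory and the application of Theorem~\ref{main} goes through verbatim.
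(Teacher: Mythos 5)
Your overall strategy is the paper's: verify the hypotheses of Theorem~\ref{main} (in covariant form, per Remark~\ref{covariant}) via the computation $\mathrm{Nat}\bigl(\Tor_{1}(A,-),\Tor_{1}(A',-)\bigr)\simeq(\underline{A,A'})$, and then confront the fact that f.p.($\mathrm{mod}$-$\Lambda$) need not be abelian. You identify that obstacle correctly, but your resolution of it has a gap. The subcategory of subobjects (or subquotients) of $A$ is indeed a length category, but it is \emph{not} contained in f.p.($\mathrm{mod}$-$\Lambda$): a submodule or quotient of a finitely presented module of finite length is finitely generated but need not be finitely presented (e.g.\ a simple module $\Lambda/\mathfrak{m}$ with $\mathfrak{m}$ a non--finitely generated maximal right ideal). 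The natural-transformation hypothesis of Theorem~\ref{main} is verified for $\Tor_{1}$ only through the isomorphism $\Tor_{1}(M,-)\simeq(\underline{\Tr M,-})$ and the transpose duality, both of which require $M$ to be finitely presented; and this hypothesis is needed not just for $A$ but for every $A_{i}$ produced in the iteration, since at each stage one must realize an idempotent of $\mathrm{Nat}\bigl(\Tor_{1}(A_{i},-),\Tor_{1}(A_{i},-)\bigr)$ by an endomorphism of $A_{i}$. So the theorem cannot be run ``verbatim'' inside the length subcategory; one must show the chain stays finitely presented. Your phrase ``hence lives in f.p.($\mathrm{mod}$-$\Lambda$)'' asserts exactly this for $B$, but the ``hence'' is unsupported: the image of an endomorphism of a finitely presented module is a priori only finitely generated.

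The missing step --- which is the entire content of the paper's own proof --- is this: the kernel of the epimorphism $A_{i}\twoheadrightarrow A_{i+1}$ is a submodule of the finite-length module $A_{i}$, hence is finitely generated, so $A_{i+1}$ is finitely presented whenever $A_{i}$ is; since $A_{0}=A$ is finitely presented, induction keeps the whole chain, and in particular $B=A_{n}$, inside f.p.($\mathrm{mod}$-$\Lambda$), where all the ingredients of Theorem~\ref{main} are available. With that one observation inserted, your argument closes. (A minor additional slip: the proof of Theorem~\ref{main} produces $B$ as the last term of a chain in which each $A_{i+1}$ is the image of an endomorphism of $A_{i}$ that may vary with $i$; it is not literally $\Imr(f^{n})$ for a single endomorphism $f$ of $A$.)
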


\begin{proof}
The proof of Theorem~\ref{main} produces a module $B = A_{n}$, where 
\[
A = A_{0} \supset A_{1} \supset \ldots \supset A_{n} = B
\] 
is a sequence of submodules of $A$ such that each $A_{i+1}$  is a homomorphic image of $A_{i}$. We cannot immediately use this theorem, because the category of finitely presented modules does not, in general, have kernels and is not therefore abelian. But the proof would still work if we show that each $A_{i}$ is finitely presented. Since for each $i$ the kernel of the epimorphism $A_{i} \to A_{i+1}$, being a submodule of a module of finite length, is finitely generated, $A_{i+1}$ is finitely presented whenever $A_{i}$ is. But $A = A_{0}$ is finitely presented, and an induction argument finishes the proof.
\end{proof}
\begin{remark}
 The above result is also true, with obvious modifications, for left finitely presented modules.
\end{remark}

\end{document}